\newtheorem{Theorem}{Theorem}[section]
\newtheorem{Lemma}[Theorem]{Lemma}
\newtheorem{Proposition}[Theorem]{Proposition}
\newtheorem{Remark}[Theorem]{Remark}
\def\fka{{\frak a}}
\def\fkb{{\frak b}}
\def\fkm{{\frak m}}
\def\opn#1#2{\def#1{\operatorname{#2}}}
\opn\Spec{Spec}
\opn\Supp{Supp}
\opn\supp{supp}
\opn\Max{Max}
\opn\max{max}
\opn\Min{Min}
\opn\min{min}
\opn\Ass{Ass}
\opn\Assh{Assh}
\opn\Ann{Ann}
\opn\depth{depth}
\opn\rank{rank}
\opn\Mat{Mat}
\opn\Tot{Tot}
\opn\Sym{Sym}
\def\bsn{{\boldsymbol n}}
\opn\div{div}
\opn\Div{Div}
\opn\cl{cl}
\opn\Cl{Cl}
\opn\Ker{Ker}
\opn\Coker{Coker}
\opn\Im{Im}
\opn\Hom{Hom}
\opn\Tor{Tor}
\opn\Ext{Ext}
\opn\End{End}
\opn\Fitt{Fitt}
\opn\Aut{Aut}
\opn\id{id}
\opn\nat{nat}
\opn\pff{pf}
\opn\Pf{Pf}
\opn\GL{GL}
\opn\SL{SL}
\opn\G{G}
\opn\E{E}
\opn\H{H}
\opn\M{M}
\opn\mod{mod}
\opn\ord{ord}
\opn\det{det}
\opn\Soc{Soc}
\opn\chara{char}
\opn\length{\ell}
\opn\pd{pd}
\opn\rk{rk}
\opn\projdim{proj\,dim}
\opn\injdim{inj\,dim}
\opn\rank{rank}
\opn\depth{depth}
\opn\grade{grade}
\opn\height{ht}
\opn\embdim{emb\,dim}
\opn\codim{codim}
\renewcommand{\hat}{\widehat}
\title{A formula for the associated Buchsbaum-Rim multiplicities of a direct sum of cyclic modules} 
\author{Futoshi Hayasaka}
\address{Department of Environmental and Mathematical Sciences, 
Okayama University, 
3-1-1 Tsushimanaka, Kita-ku, Okayama, 700-8530, JAPAN}
\email{hayasaka@okayama-u.ac.jp}
\keywords{Buchsbaum-Rim multiplicity, Hilbert-Samuel multiplicity, cyclic modules}
\subjclass[2010]{Primary 13H15; Secondary 13P99}
\begin{document}

\maketitle

\begin{abstract}
In this article, we compute the Buchsbaum-Rim function of two variables associated to
a direct sum of cyclic modules and give a formula for the last positive associated Buchsbaum-Rim multiplicity 
in terms of the ordinary Hilbert-Samuel multiplicity of an ideal. 
This is a generalization of a formula for the last positive Buchsbaum-Rim multiplicity given 
by Kirby and Rees. 
\end{abstract}

\section{Introduction}


Let $(R, \fkm)$ be a Noetherian local ring with the maximal ideal $\fkm$ of dimension 
$d>0$ and let $C$ be a nonzero $R$-module of finite length. 
Let $\varphi: R^n \to R^r$ be an $R$-linear map of free modules with $C=\Coker \varphi$, 
and put $M:=\Im \varphi \subset F:=R^r$. Then 
one can consider the function  
$$\lambda_C(p):=\ell_R([\Coker \Sym_R(\varphi)]_{p})=\ell_R(S_{p}/M^{p}),  $$
where $S_p$ (resp. $M^p$) is a homogeneous component of 
degree $p$ of $S=\Sym_R(F)$ (resp. $R[M]=\Im \Sym_R(\varphi)$). The function of this type was introduced by 
Buchsbaum-Rim \cite{BR2} and they proved that $\lambda_C(p)$ is eventually a polynomial of degree $d+r-1$. 
Then they defined a multiplicity of $C$ as 
$$e(C):=\mbox{(The coefficient of} \ p^{d+r-1} \ \mbox{in the polynomial})\times (d+r-1)!, $$
which is now called the {\it Buchsbaum-Rim multiplicity} of $C$. They also proved that it
is independent of the choice of $\varphi$. 
Note that the Buchsbaum-Rim multiplicity $e(R/I)$ of a cyclic module $R/I$ defined 
by an $\fkm$-primary ideal $I$ in $R$ 
coincides with the ordinary Hilbert-Samuel multiplicity $e(I)$ of the ideal $I$. 

More recently, Kleiman-Thorup \cite{KT1, KT2} and Kirby-Rees \cite{KR1, KR2} introduced another kind of multiplicities 
which is related to the Buchsbaum-Rim multiplicity. They considered the function of two variables 
$$\Lambda(p, q):={\ell}_R(S_{p+q}/M^{p}S_{q}), $$
and proved that it is eventually a polynomial of total degree $d+r-1$. 
Then they defined a sequence of multiplicities of $C$ as, for $j=0, 1, \dots , d+r-1$, 
$$e^j(C):=(\mbox{The coefficient of} \ p^{d+r-1-j}q^j \ \mbox{in the polynomial})\times (d+r-1-j)!j! $$
and proved that it is independent of the choice of $\varphi$. Moreover they proved that  
$$e(C) = e^0(C) \geq e^1(C) \geq \dots \geq e^{r-1}(C)>e^r(C)= \dots = e^{d+r-1}(C)=0$$
where $r=\mu_R(C)$ is the minimal number of generators of $C$. Namely, 
the first multiplicity $e^0(C)$ is just the classical Buchsbaum-Rim multiplicity $e(C)$, and the sequence is always 
a descending sequence of non-negative integers with $e^j(C)=0$ if $j \geq r$, and $e^{r-1}(C)$ is the last positive
multiplicity. Thus the multiplicity $e^j(C)$ is now called {\it $j$-th Buchsbaum-Rim multiplicity} of $C$ or 
the {\it associated Buchsbaum-Rim multiplicity} of $C$. 

In this article, we investigate the detailed relation between the classical Buchsbaum-Rim multiplicity $e(C)=e^0(C)$ 
and the other one $e^j(C)$ for $j=1, 2, \dots , r-1$ by computing these invariants in a certain concrete case. 
There are some computation of the classical Buchsbaum-Rim multiplicity 
(see \cite{Bi, CLU, J, KR1, KR2} for instance). 
However, it seems that the computation of the other associated Buchsbaum-Rim multiplicities is done only 
for very special cases \cite{Ha, KR1, KR2}. 
One of the important cases is the case where $C=R/I_1 \oplus \dots \oplus R/I_r$ is a 
direct sum of cyclic modules. This case was first considered by Kirby-Rees \cite{KR1, KR2} and 
they gave an interesting formula for the classical Buchsbaum-Rim multiplicity $e(C)=e^0(C)$ in terms of mixed multiplicities of ideals (see also \cite{Bi} for more direct approach).

\begin{Theorem} {\rm (Kirby-Rees \cite{KR2})}
Let $I_1, \dots , I_r$ be $\fkm$-primary ideals in $R$. Then we have a formula 
$$e(R/I_1 \oplus \dots \oplus R/I_r)=\sum_{\stackrel{i_1, \dots , i_r \geq 0}{i_1+\dots +i_r=d}}e_{i_1 \cdots i_r}(I_1, \dots , I_r), $$
where $e_{i_1 \cdots i_r}(I_1, \dots , I_r)$ is the mixed multiplicity of $I_1, \dots , I_r$ of type $(i_1, \dots , i_r)$. 
\end{Theorem}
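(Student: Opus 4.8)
The plan is to reduce the computation of the Buchsbaum-Rim multiplicity $e(C)$ for $C = R/I_1 \oplus \dots \oplus R/I_r$ to a Hilbert-Samuel multiplicity computation on the Rees algebra of a single ideal in a polynomial extension. First I would choose an explicit presentation: since $C$ is a direct sum of cyclic modules, if $I_j = (a_{j1}, \dots, a_{jn_j})$ then $C = \Coker \varphi$ where $\varphi \colon R^n \to R^r$ (with $n = n_1 + \dots + n_r$) is the block-diagonal map whose $j$-th block is the row $(a_{j1}, \dots, a_{jn_j})$. With $F = R^r$ and $M = \Im \varphi$, the subalgebra $R[M] = \Im \Sym_R(\varphi) \subset S = \Sym_R(F) = R[T_1, \dots, T_r]$ is generated by the linear forms $a_{j\ell} T_j$, so $R[M]$ is (a quotient of) the multi-Rees-type algebra of $I_1, \dots, I_r$ with the grading by total degree in the $T$'s. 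The key point is that $\lambda_C(p) = \ell_R(S_p / M^p)$ can be read off from the bigraded structure of $S$ over $R[M]$.

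Next I would invoke a standard dictionary between Buchsbaum-Rim multiplicities and mixed multiplicities. The length $\ell_R(S_p/M^p)$, as $p \to \infty$, is governed by the associated graded object of the filtration $\{M^p S\}_p$ of $S$; because the $T_j$ act freely, the leading term of $\lambda_C(p)$ of degree $d + r - 1$ is a sum of contributions indexed by the "distribution" of the degree $p$ among the $r$ variables $T_1, \dots, T_r$, each such contribution involving a monomial $T_1^{q_1}\cdots T_r^{q_r}$ with $q_1 + \dots + q_r$ near $p$ and a corresponding colength $\ell_R(R / I_1^{q_1}\cdots I_r^{q_r})$-type term. Extracting the top-degree coefficient and using the definition of mixed multiplicities $e_{i_1\cdots i_r}(I_1, \dots, I_r)$ as the coefficients in the expansion of $\ell_R(R/I_1^{q_1}\cdots I_r^{q_r})$ in the $q_j$, one obtains that the coefficient of $p^{d+r-1}$ in $\lambda_C(p)$, times $(d+r-1)!$, equals $\sum_{i_1 + \dots + i_r = d} e_{i_1\cdots i_r}(I_1, \dots, I_r)$. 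The combinatorial step here is that summing the top-degree form of the $r$-variable polynomial $\ell_R(R/I_1^{q_1}\cdots I_r^{q_r})$ over the simplex $q_1 + \dots + q_r = p$ (with the appropriate free $T$-variable bookkeeping) produces exactly the symmetric sum over $i_1 + \dots + i_r = d$, after cancelling the binomial factors against $(d+r-1)!$.

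Concretely, I would carry out the steps in this order: (i) fix the block-diagonal presentation $\varphi$ and identify $S = R[T_1,\dots,T_r]$, $R[M] = R[a_{j\ell}T_j]$; (ii) decompose $S_p/M^pS$ into the direct sum over monomials $T^{\boldsymbol n}$ of degree $\ge p$ of cyclic $R$-modules, and show $M^p S_q \cap S_p(T^{\boldsymbol n}) = (I_1^{\boldsymbol n})\cdots$ appropriately, so that $\lambda_C(p) = \sum \ell_R(R/(\text{product of powers of the }I_j))$ over a suitable index set; (iii) identify the leading polynomial behavior of each summand via the mixed multiplicity expansion of $\ell_R(R/I_1^{q_1}\cdots I_r^{q_r})$ for $q_1 + \dots + q_r \gg 0$ (which has degree $d$ in $(q_1,\dots,q_r)$ with top form $\sum_{i_1+\dots+i_r=d} e_{i_1\cdots i_r}(I_1,\dots,I_r)\,q_1^{i_1}\cdots q_r^{i_r}/(i_1!\cdots i_r!)$); (iv) sum over the index set and match top-degree coefficients. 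I expect the main obstacle to be step (ii)–(iii): making rigorous the claim that the filtration $M^pS_q$ restricted to each graded piece is exactly $I_1^{q_1}\cdots I_r^{q_r}$ (rather than merely containing or contained in it) requires care — one must show $\Im \Sym_R(\varphi)$ in degree $p$ is the span of the products of $p$ of the generators $a_{j\ell}T_j$, which sorts by $T$-degree into the products $I_1^{q_1}\cdots I_r^{q_r}$ with $\sum q_j = p$, and this uses only that multiplication of the $T_j$ is free; then the asymptotic length count and the summation over the discrete simplex, together with the standard combinatorial identity $\sum_{q_1+\dots+q_r = p}\binom{q_j}{i_j}\cdots = \binom{p + r - 1}{d + r - 1}$-type relations, delivers the stated equality of multiplicities.
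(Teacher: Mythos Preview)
The paper does not prove this theorem; it is quoted from Kirby--Rees \cite{KR2} as background (with \cite{Bi} cited for a ``more direct approach''), and the paper's own contribution concerns the \emph{associated} multiplicities $e^j(C)$, culminating in Theorem~\ref{main}. So there is no in-paper proof to compare your proposal against. That said, the scaffolding set up in Section~2---identifying $S=R[t_1,\dots,t_r]$, $R[M]=R[I_1t_1,\dots,I_rt_r]$, and decomposing $M^pS_q$ monomial by monomial---is exactly your step~(i)--(ii), and specializing to $q=0$ gives immediately $\lambda_C(p)=\sum_{|\boldsymbol n|=p}\ell_R(R/\boldsymbol I^{\boldsymbol n})$, which is the decomposition you want. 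Your combinatorial endgame (the Vandermonde-type identity $\sum_{|\boldsymbol n|=p}\binom{n_1}{i_1}\cdots\binom{n_r}{i_r}=\binom{p+r-1}{d+r-1}$) is also correct, and the overall strategy is essentially the direct approach of \cite{Bi}.

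There is one genuine gap. The mixed-multiplicity expansion of $\ell_R(R/I_1^{n_1}\cdots I_r^{n_r})$ as a polynomial of degree $d$ is only valid when \emph{all} $n_j$ are large, but your sum over $|\boldsymbol n|=p$ includes the faces of the simplex where some $n_j$ is bounded (or zero). You must argue separately that the contribution from a boundary strip $\{n_j\le c\}$ is $O(p^{d+r-2})$---for instance by bounding each length there by $\ell_R(R/\fkb^p)$ for a fixed $\fkm$-primary $\fkb\subset I_1\cdots I_r$ and counting lattice points in a slab of one lower dimension. This is routine but not automatic, and it is exactly where the $\fkm$-primary hypothesis enters. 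A minor point: your step~(ii) wording is garbled (``$S_p/M^pS$'', ``monomials of degree $\ge p$''); for $\lambda_C$ you want $S_p/M^p$, indexed by $|\boldsymbol n|=p$ only.
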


For the other multiplicities $e^j(R/I_1 \oplus \dots \oplus R/I_r)$ where $j=1, \dots , r-1$, 
Kirby-Rees \cite{KR2} considered the special case where $I_1 \subset \dots \subset I_r$ and proved the following. 

\begin{Theorem} {\rm (Kirby-Rees \cite{KR2})}
Let $I_1, \dots , I_r$ be $\fkm$-primary ideals in $R$. Suppose that $I_1 \subset \dots \subset I_r$. Then for any $j=1, \dots , r-1$, 
$$e^j(R/I_1 \oplus \dots \oplus R/I_r)=e(R/I_{j+1} \oplus \dots \oplus R/I_r).  $$
In particular, the last positive associated Buchsbaum-Rim
multiplicity 
$$e^{r-1}(R/I_1 \oplus \dots \oplus R/I_r)=e(R/I_r)$$
is the Hilbert-Samuel multiplicity of $I_r$.   
\end{Theorem}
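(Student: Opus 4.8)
The plan is to compute the two‑variable function $\Lambda(p,q)$ of $C=R/I_1\oplus\dots\oplus R/I_r$ explicitly from a block‑diagonal presentation, reorganize the resulting sum using the chain hypothesis, recognize the pieces as ordinary Buchsbaum‑Rim functions of the partial direct sums $R/I_s\oplus\dots\oplus R/I_r$, and then just read off the coefficient of $p^{d+r-1-j}q^j$. First I would present $C=\Coker\varphi$ with $\varphi\colon R^n\to F=R^r$ block diagonal, the $i$‑th block a matrix whose entries generate $I_i$; then $M=\Im\varphi=\bigoplus_{i=1}^r I_iT_i\subset F$ for the standard basis $T_1,\dots,T_r$, and $S=\Sym_R(F)=R[T_1,\dots,T_r]$. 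A routine computation gives $M^p=\bigoplus_{|\bsn|=p}I^{\bsn}T^{\bsn}$ with $I^{\bsn}:=I_1^{n_1}\cdots I_r^{n_r}$, so
$$S_{p+q}/M^pS_q=\bigoplus_{|\boldsymbol{k}|=p+q}\Bigl(R\Big/\sum_{\substack{\boldsymbol{0}\le\bsn\le\boldsymbol{k}\\ |\bsn|=p}}I^{\bsn}\Bigr)T^{\boldsymbol{k}},\qquad\text{hence}\qquad \Lambda(p,q)=\sum_{|\boldsymbol{k}|=p+q}\ell_R\Bigl(R\Big/\sum_{\substack{\boldsymbol{0}\le\bsn\le\boldsymbol{k}\\|\bsn|=p}}I^{\bsn}\Bigr).$$

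Next I would use $I_1\subseteq\dots\subseteq I_r$: transferring one unit of $\bsn$ from a smaller to a larger index only enlarges $I^{\bsn}$, so the inner sum collapses to the single ideal $I^{\bsn^{*}}$, where $\bsn^{*}=\bsn^{*}(\boldsymbol{k},p)$ is the "greedy fill from the top". Precisely, for $p\ge 1$ there is a unique index $s=s(\boldsymbol{k},p)$ with $\sum_{i>s}k_i<p\le\sum_{i\ge s}k_i$, and then $n^{*}_i=k_i$ for $i>s$, $n^{*}_i=0$ for $i<s$, and $n^{*}_s=a$ with $a:=p-\sum_{i>s}k_i\ge 1$. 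Reindexing $\Lambda(p,q)$ by $s$ and by the independent data $(a,k_{s+1},\dots,k_r)$ with $a\ge1$, $a+k_{s+1}+\dots+k_r=p$, and $(b,k_1,\dots,k_{s-1})$ with $b\ge0$, $b+k_1+\dots+k_{s-1}=q$ (and $k_s=a+b$), and using that the number of compositions of $q$ into $s$ nonnegative parts is $\binom{q+s-1}{s-1}$, I obtain the identity, valid for $p\ge1$, $q\ge0$,
$$\Lambda(p,q)=\sum_{s=1}^{r}\binom{q+s-1}{s-1}\,\Phi_s(p),\qquad \Phi_s(p):=\sum_{\substack{a\ge1,\ k_{s+1},\dots,k_r\ge0\\ a+k_{s+1}+\dots+k_r=p}}\ell_R\bigl(R/I_s^{a}I_{s+1}^{k_{s+1}}\cdots I_r^{k_r}\bigr).$$

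The key step is then to observe that $\Phi_s$ is, up to a correction of strictly smaller degree, a Buchsbaum‑Rim function. Put $C_s:=R/I_s\oplus\dots\oplus R/I_r$ and $C_s'':=R/I_{s+1}\oplus\dots\oplus R/I_r$; the same block‑diagonal computation applied to $C_s$ shows that $\Phi_s(p)$ is obtained from $\lambda_{C_s}(p)$ by discarding the terms with $a=0$, and those terms add up to $\lambda_{C_s''}(p)$, so $\Phi_s=\lambda_{C_s}-\lambda_{C_s''}$. By Buchsbaum–Rim, $\lambda_{C_s}$ is eventually a polynomial of degree $d+(r-s+1)-1=d+r-s$ with leading coefficient $e(C_s)/(d+r-s)!$, while $\lambda_{C_s''}$ has degree at most $d+r-s-1$; hence $\Phi_s$ is eventually a polynomial in $p$ of degree exactly $d+r-s$ with leading coefficient $e(C_s)/(d+r-s)!$. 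Since $\binom{q+s-1}{s-1}$ is a polynomial in $q$ of degree $s-1$ with leading term $q^{s-1}/(s-1)!$, the $s$‑th summand has total degree $d+r-1$, and the degree‑$(d+r-1)$ part of $\Lambda$ equals $\sum_{s=1}^{r}\tfrac{e(C_s)}{(s-1)!(d+r-s)!}\,q^{s-1}p^{d+r-s}$. Thus the coefficient of $p^{d+r-1-j}q^{j}$ is $\tfrac{e(C_{j+1})}{j!(d+r-1-j)!}$ (only $s=j+1$ contributes), giving $e^{j}(C)=j!\,(d+r-1-j)!\cdot\tfrac{e(C_{j+1})}{j!(d+r-1-j)!}=e(C_{j+1})=e(R/I_{j+1}\oplus\dots\oplus R/I_r)$, and for $j=r-1$ this is $e(R/I_r)=e(I_r)$.

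I expect the main obstacle to be the combinatorics of the second step: fixing the split index $s$ correctly, verifying that the reindexing really is a bijection, and handling the $a=0$ boundary so that $\Phi_s$ is cleanly expressed through $\lambda_{C_s}$ and $\lambda_{C_s''}$. Everything after the displayed formula for $\Lambda(p,q)$ is then a degree count and a coefficient comparison, using only that all the functions in sight are eventually polynomial and the definition of the Buchsbaum‑Rim multiplicity.
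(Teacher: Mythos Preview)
Your argument is correct, but note that the paper itself does not prove this statement: it is quoted as a result of Kirby--Rees \cite{KR2}, and the paper's own contribution is Theorem~\ref{main}, which extends only the $j=r-1$ case to arbitrary (not necessarily nested) $\fkm$-primary ideals. So there is no ``paper's own proof'' to compare against directly.

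It is still instructive to set your method beside the paper's proof of Theorem~\ref{main}. Both start from the identical expression
\[
\Lambda(p,q)=\sum_{|\boldsymbol k|=p+q}\ell_R\bigl(R/J_{p,q}(\boldsymbol k)\bigr),\qquad J_{p,q}(\boldsymbol k)=\sum_{\substack{\boldsymbol 0\le\bsn\le\boldsymbol k\\ |\bsn|=p}}\boldsymbol I^{\bsn}.
\]
You then use the chain hypothesis $I_1\subset\dots\subset I_r$ to collapse $J_{p,q}(\boldsymbol k)$ to the single monomial ideal $\boldsymbol I^{\bsn^{*}}$ for the greedy top-fill $\bsn^{*}$, and your bijection indexed by the split index $s$ produces the exact identity $\Lambda(p,q)=\sum_{s=1}^{r}\binom{q+s-1}{s-1}\Phi_s(p)$ with $\Phi_s=\lambda_{C_s}-\lambda_{C_{s+1}}$ (where $C_s=R/I_s\oplus\dots\oplus R/I_r$ and $C_{r+1}=0$). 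This is a clean closed formula, and reading off the top-degree part recovers all of the $e^j(C)$ simultaneously. The paper, working without the chain hypothesis, cannot collapse $J_{p,q}(\boldsymbol k)$ this way; instead it stratifies $H_{p,q}$ by the number of coordinates exceeding $p$, computes the top stratum $H^{(r)}$ exactly (Proposition~\ref{k=r}), and only bounds the remaining strata by quantities of $q$-degree at most $r-2$ (Propositions~\ref{k=r-1} and~\ref{k<r-1}). That suffices to pin down the coefficient of $p^{d}q^{r-1}$, hence $e^{r-1}(C)$, but deliberately throws away the information needed for $e^j(C)$ with $j<r-1$. In short: your approach is sharper and more elementary under the nesting assumption, while the paper trades knowledge of the lower $e^j$ for the ability to drop that assumption.

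Your self-identified ``main obstacle'' is in fact fine: the constraints $\sum_{i>s}k_i<p\le\sum_{i\ge s}k_i$ translate exactly into $a\ge 1$ and $b:=k_s-a\ge 0$, so the map $\boldsymbol k\mapsto\bigl(s,(a,k_{s+1},\dots,k_r),(b,k_1,\dots,k_{s-1})\bigr)$ is a genuine bijection and the count $\binom{q+s-1}{s-1}$ follows.
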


The purpose of this article is to compute $e^j(R/I_1 \oplus \dots \oplus R/I_r)$ for any $\fkm$-primary 
ideals $I_1, \dots , I_r$ in $R$ and give a formula for the last positive associated Buchsbaum-Rim multiplicity $e^{r-1}(R/I_1 \oplus \dots \oplus R/I_r)$ in terms of the ordinary Hilbert-Samuel multiplicity of a sum of ideals.  
Here is the main result. 

\begin{Theorem}\label{main}
Let $I_1, \dots , I_r$ be arbitrary $\fkm$-primary ideals in $R$. Then we have a formula
$$e^{r-1}(R/I_1 \oplus \dots \oplus R/I_r)=e(R/I_1 + \dots + I_r).  $$
In particular, if $I_1, \dots , I_{r-1} \subset I_r$,  
$$e^{r-1}(R/I_1 \oplus \dots \oplus R/I_r)=e(R/I_r).$$
\end{Theorem}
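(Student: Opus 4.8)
The plan is to compute the two-variable Buchsbaum-Rim polynomial $\Lambda(p,q)$ for $C = R/I_1 \oplus \dots \oplus R/I_r$ directly, read off the coefficient of $p^{d-1}q^{r-1}$, and identify it with $e(R/(I_1+\dots+I_r))$. The module $C$ is the cokernel of the diagonal map $\varphi \colon R^n \to R^r = F$ whose image is $M = I_1 \oplus \dots \oplus I_r \subset F$. With $S = \Sym_R(F) = R[T_1,\dots,T_r]$ a polynomial ring, the subalgebra $R[M] = \bigoplus_{p\geq 0} M^p$ has $p$-th graded piece $M^p = \bigoplus_{a_1+\dots+a_r = p} I_1^{a_1}\cdots I_r^{a_r}\, T_1^{a_1}\cdots T_r^{a_r}$, and likewise $M^p S_q = \bigoplus_{|a| = p,\ |b| = q} I_1^{a_1}\cdots I_r^{a_r}\, T^{a+b}$. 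Hence
\begin{equation*}
\Lambda(p,q) = \ell_R(S_{p+q}/M^p S_q) = \sum_{\substack{c_1+\dots+c_r = p+q}} \ell_R\bigl(R/\!\!\sum_{\substack{a+b = c,\ |a| = p,\ |b| = q\\ a,b\geq 0}} I_1^{a_1}\cdots I_r^{a_r}\bigr),
\end{equation*}
where the inner sum of ideals is taken over all ways of writing the exponent vector $c$ as $a+b$ with $|a|=p$. For a fixed $c$ with $|c| = p+q$, the admissible $a$ are exactly those with $0\le a_i\le c_i$ and $\sum a_i = p$; the dominant term (the smallest ideal, hence the one controlling the length asymptotically) comes from spreading $a$ as evenly as possible, but more importantly the whole sum is squeezed between $I_1^{a_1}\cdots I_r^{a_r}$ for any single admissible $a$ and a product of such.

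Next I would extract the leading behavior in the regime relevant to the $p^{d-1}q^{r-1}$ coefficient. Fix $q$ and let $p\to\infty$: the number of exponent vectors $c$ with $|c| = p+q$ and each $c_i$ "not too small" grows, and for those $c$ the inner sum of ideals stabilizes. The key reduction is that when all $c_i$ are large compared to $q$, one can always choose $a$ with every $a_i \geq 1$ together with $a$ ranging so that, summing over all admissible $a$, the ideal $\sum_a I^a$ equals (up to terms that do not affect the top-degree coefficient) $I_1^{c_1}\cdots I_r^{c_r}$ divided by a factor of total degree $q$ — concretely, it is comparable to $(I_1\cdots I_r)^{\lfloor\cdot\rfloor}$ times $(I_1+\dots+I_r)^q$-type corrections. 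After collecting terms, the coefficient of $q^{r-1}$ forces $q$ to be distributed one unit in each of $r-1$ of the "slots", which is precisely the combinatorial signature of the ideal $I_1+\dots+I_r$ appearing to a power linear in $q$; then letting $p\to\infty$ produces the Hilbert-Samuel polynomial of $I_1+\dots+I_r$ in $p$, whose leading coefficient is $e(I_1+\dots+I_r)/(d-1)!$.

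A cleaner route, which I would actually pursue, is to use the known inequalities and the monotonicity $e^0 \geq e^1 \geq \dots \geq e^{r-1} > 0$ from Kleiman-Thorup/Kirby-Rees together with a sandwich argument. For the upper bound, observe that replacing each $I_i$ by the larger ideal $J := I_1+\dots+I_r$ gives a surjection $R/J^{\oplus r} \to$ (associated graded pieces) in a way that yields $e^{r-1}(R/I_1\oplus\dots\oplus R/I_r) \leq e^{r-1}(R/J\oplus\dots\oplus R/J)$, and for $r$ copies of the same ideal $J$ one computes directly (or cites the $I_1\subset\dots\subset I_r$ case of the second displayed Kirby-Rees theorem with all $I_i = J$) that $e^{r-1}(R/J^{\oplus r}) = e(R/J) = e(J)$. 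For the reverse inequality, I would exhibit enough elements in $M^pS_q$ to bound $\ell_R(S_{p+q}/M^pS_q)$ from below by the Hilbert-Samuel function of $J$: the diagonal exponent vector shows $J^{p}T_1^{a_1}\cdots \subset M^pS_q$-type containments give $\Lambda(p,q) \geq \binom{q+r-2}{r-1}\ell_R(R/J^{p+1}) + (\text{lower order})$, and the binomial coefficient $\binom{q+r-2}{r-1}$ contributes exactly the monomial $q^{r-1}/(r-1)!$. Matching leading coefficients on both sides gives the formula. The main obstacle I anticipate is the bookkeeping in the lower bound: one must check that the chosen elements of $M^pS_q$ are $R$-linearly independent modulo the rest and that the "lower order" error genuinely does not contaminate the $p^{d-1}q^{r-1}$ coefficient — i.e., controlling cross terms among the ideals $I_1^{a_1}\cdots I_r^{a_r}$ for the non-diagonal exponent vectors, which is where a careful filtration argument or passage to the associated graded ring of $J$ will be needed.
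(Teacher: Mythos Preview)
Your setup in the first paragraph is exactly the paper's starting point: writing $\Lambda(p,q)=\sum_{|\boldsymbol n|=p+q}\ell_R\bigl(R/J(\boldsymbol n)\bigr)$ with $J(\boldsymbol n)=\sum_{|\boldsymbol i|=p,\ \boldsymbol 0\le\boldsymbol i\le\boldsymbol n}\boldsymbol I^{\boldsymbol i}$. (One slip: the relevant coefficient is that of $p^{d}q^{r-1}$, not $p^{d-1}q^{r-1}$, since $e^{r-1}$ sits in total degree $d+r-1$.) After that, however, the proposal does not close.

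The sandwich in your third paragraph has both inequalities pointing the wrong way. Replacing each $I_i$ by $J:=I_1+\dots+I_r$ enlarges $M=\bigoplus I_i$ to $J^{\oplus r}$, hence enlarges $M^pS_q$ and \emph{decreases} $\Lambda(p,q)$; this yields $e^{r-1}(C)\ge e^{r-1}\bigl((R/J)^{\oplus r}\bigr)=e(J)$, the opposite of what you claim. Likewise, ``exhibiting elements in $M^pS_q$'' can only bound $\Lambda$ from \emph{above}, again the opposite direction. So one half of the sandwich (the lower bound $e^{r-1}(C)\ge e(J)$) is indeed easy, but your argument gives nothing for the upper bound, and no obvious containment $M\supset\,?$ produces $e(J)$ on the other side.

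The paper gets the upper bound by a computation you do not attempt. It works in the regime $q\ge (p+1)r\gg 0$ (so $q$ large relative to $p$, the reverse of your ``fix $q$, let $p\to\infty$''), where every $\boldsymbol n$ with $|\boldsymbol n|=p+q$ has at least one coordinate $>p$. Stratify $H_{p,q}$ by $k=\,{}^{\sharp}\{i:n_i>p\}$. The key observation is that on the top stratum $k=r$ one has $J(\boldsymbol n)=(I_1+\dots+I_r)^p$ \emph{exactly} (because then every $\boldsymbol i$ with $|\boldsymbol i|=p$ satisfies $\boldsymbol i\le\boldsymbol n$), and the number of such $\boldsymbol n$ is $\binom{q-(r-1)p-1}{r-1}$, whose leading term in $q$ is $q^{r-1}/(r-1)!$. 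For each $k\le r-1$ the paper bounds $\Lambda_{H^{(k)}}(p,q)$ by $\binom{q-(k-1)p-1}{k-1}$ times a polynomial in $p$, so the total contribution from $k<r$ has $q$-degree at most $r-2$ and cannot touch the $p^dq^{r-1}$ coefficient. This isolates that coefficient as $e(J)/\bigl(d!\,(r-1)!\bigr)$ and finishes. Your proposal lacks this stratification and the bound on the lower strata, which is where the actual work lies.
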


This extends the Kirby-Rees formula 
for the last positive associated Buchsbaum-Rim multiplicity
and of our previous result \cite{Ha}. Our approach is a direct computation of the Buchsbaum-Rim function 
of two variables by using some ideas which is different from the one in \cite{KR2}. 
We note that it seems to be difficult to get the general formula by applying their approach \cite{KR2}. 
Moreover, our approach indicates the general formula for any other associated Buchsbaum-Rim multiplicities
$e^j(C)$ for $j=1, \dots , r-1$ which we will discuss and present it elsewhere. 

The proof of Theorem \ref{main} will be given in section 3.  
Section 2 is a preliminary character. 
In section 2, we will give a few elementary lemmas that we will use in the proof of Theorem \ref{main}. 
Our notation will be also fixed in this section. 

Throughout this article, let $(R, \fkm)$ be a Noetherian local ring with the maximal ideal $\fkm$ of dimension $d>0$. 
Let $r>0$ be a fixed positive integer and let $[r]=\{1, \dots , r\}$. For a finite set $A$, ${}^{\sharp} A$ denotes the number of elements of $A$. 
Vectors are always written in bold-faced letters, e.g., $\boldsymbol i =(i_1, \dots , i_r)$. 
We work in the usual multi-index notation. Let $I_1, \dots , I_r$ be ideals in $R$ and 
let $t_1, \dots , t_r$ be indeterminates. 
Then for a vector $\boldsymbol i =(i_1, \dots , i_r) \in \mathbb Z_{\geq 0}^r$, 
we denote $\boldsymbol I^{\boldsymbol i}=I_1^{i_1} \cdots I_r^{i_r}, \boldsymbol t^{\boldsymbol i}=t_1^{i_1} 
\cdots t_r^{i_r}$ and $| \boldsymbol i | =i_1+ \dots + i_r$. 
For vectors $\boldsymbol a, \boldsymbol b \in \mathbb Z^r$, 
$\boldsymbol a \geq \boldsymbol b \stackrel{{\rm def}}{\Leftrightarrow} a_i \geq b_i \ \mbox{for all} \ i=1, \dots , r.$
Let $\boldsymbol 0=(0, \dots , 0)$ be the zero vector in $\mathbb Z_{\geq 0}^r$ and let 
$\boldsymbol e=(1, 1, \dots , 1) \in \mathbb Z_{\geq 0}^{r}$.

\section{Preliminaries}

In what follows, let $I_1, \dots , I_r$ be $\fkm$-primary ideals in $R$ and let $C=R/I_1 \oplus \dots \oplus R/I_r$. 
In order to compute the associated Buchsbaum-Rim multiplicity of $C$, by taking a minimal free presentation $R^n \stackrel{\varphi}{\to} R^r \to C \to 0$
where the image of $\varphi$ is given by $M:=\Im \varphi=I_1 \oplus \dots \oplus I_r \subset F:=R^r$, we may  
assume that $S=R[t_1, \dots , t_r]$ is a polynomial ring and 
$R[M]=R[I_1t_1, \dots , I_rt_r]$ is the multi-Rees algebra of $I_1, \dots , I_r$. 
Then it is easy to see that for any $p, q \geq 0$, the module $M^pS_q$ can be expressed as 
$${\displaystyle M^{p}S_{q}=
\sum_{\substack{| \boldsymbol n | =p+q \\ \boldsymbol n \geq \boldsymbol 0} } 
\Bigg( \sum_
{\substack{| \boldsymbol i |=p \\ \boldsymbol 0 \leq \boldsymbol i \leq \boldsymbol n}} \boldsymbol 
I^{\boldsymbol i} 
\Bigg) 
\boldsymbol t^{\boldsymbol n}. }
$$
Here we consider a finite set $H_{p, q}:=\{ \boldsymbol n \in \mathbb Z_{\geq 0}^r \mid |\boldsymbol n |=p+q \}. $ 
For any $\boldsymbol n \in H_{p, q}$, let  
$${\displaystyle J_{p,q}({\boldsymbol n}):=\sum_
{\substack{| \boldsymbol i|=p \\ \boldsymbol 0 \leq \boldsymbol i \leq \boldsymbol n}} \boldsymbol I^
{\boldsymbol i}
}, $$
which is an ideal in $R$.  
Then the function $\Lambda(p,q)$ can be described as 
$${\displaystyle \Lambda(p, q) = \sum_{\boldsymbol n \in H_{p,q}} 
\ell_R(R/J_{p, q}({\boldsymbol n})). }$$
For a subset $\Delta \subset H_{p, q}$, we set 
$$\Lambda_{\Delta}(p, q):=\sum_{\boldsymbol n \in \Delta} 
\ell_R(R/J_{p, q}({\boldsymbol n})). $$
Here we define special subsets of $H_{p, q}$, which will be often used in the proof of Theorem \ref{main}. 
For $p, q>0$ and $k=1, \dots , r$, let 
$$\Delta_{p, q}^{(k)}:=\{\boldsymbol n \in H_{p, q} \mid n_1, \dots , n_k>p, n_{k+1}+ \dots + n_r \leq p \}. $$
With this notation, we begin with the following. 

\begin{Lemma}\label{lem1}
Let $p, q>0$ and $k=1, \dots , r$. 
Then for any $\boldsymbol n \in \Delta_{p, q}^{(k)}$, we have the equality 
$$J_{p, q}(\boldsymbol n)=(I_1+\dots +I_k)^{p-(n_{k+1}+\dots +n_r)} \prod_{j=k+1}^r(I_1+\dots +I_k+I_j)^{n_j}. $$
\end{Lemma}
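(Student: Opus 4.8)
The plan is to evaluate $J_{p,q}(\boldsymbol n)$ directly from its definition, splitting the multi-index $\boldsymbol i$ into its first $k$ coordinates and its last $r-k$ coordinates and then recognizing each of the two resulting pieces as a power of a sum of ideals. The only ingredients are the elementary identity $\sum_{i_1+\dots+i_k=m} I_1^{i_1}\cdots I_k^{i_k} = (I_1+\dots+I_k)^m$ and the observation that, since we are summing ideals rather than elements, the binomial coefficients appearing when one expands $(L+I_j)^{n_j}$ may be discarded.

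First I would note that for $\boldsymbol n \in \Delta_{p,q}^{(k)}$ the inequality $i_j \leq n_j$ is automatic whenever $1 \leq j \leq k$, since any $\boldsymbol i$ with $|\boldsymbol i| = p$ has $i_j \leq p < n_j$. Writing $L := I_1+\dots+I_k$ and $s := n_{k+1}+\dots+n_r$, and grouping the admissible $\boldsymbol i$ according to the value $b := i_{k+1}+\dots+i_r$ (which, because $i_j \leq n_j$ for $j>k$, ranges over $0 \leq b \leq s$), the identity above applied to the first $k$ coordinates yields
$$J_{p,q}(\boldsymbol n) \;=\; \sum_{b=0}^{s} L^{\,p-b}\Bigg(\sum_{\substack{i_{k+1}+\dots+i_r=b\\ 0\leq i_j\leq n_j}} I_{k+1}^{i_{k+1}}\cdots I_r^{i_r}\Bigg).$$
Since $\boldsymbol n \in \Delta_{p,q}^{(k)}$ forces $b \leq s \leq p$, the exponent $p-b$ is always nonnegative, so I may factor out $L^{\,p-s}$ and rewrite the right-hand side as $L^{\,p-s}$ times $\sum_{b=0}^{s}\sum_{i_{k+1}+\dots+i_r=b,\ 0\leq i_j\leq n_j} L^{\,s-b}\,I_{k+1}^{i_{k+1}}\cdots I_r^{i_r}$.

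It then remains to match this last double sum with $\prod_{j=k+1}^{r}(L+I_j)^{n_j}$. Expanding each factor as $(L+I_j)^{n_j}=\sum_{i_j=0}^{n_j}L^{\,n_j-i_j}I_j^{i_j}$ (coefficients dropped) and multiplying over $j=k+1,\dots,r$ produces exactly $\sum_{0\leq i_j\leq n_j} L^{\,s-b}\,I_{k+1}^{i_{k+1}}\cdots I_r^{i_r}$ with $b=i_{k+1}+\dots+i_r$, which is the double sum above once its terms are collected by the value of $b$. Substituting back gives $J_{p,q}(\boldsymbol n)=L^{\,p-s}\prod_{j=k+1}^{r}(L+I_j)^{n_j}$, which is the claimed formula. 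I do not expect a serious obstacle: the argument is organized bookkeeping of multi-index sums, and the two defining conditions of $\Delta_{p,q}^{(k)}$ are used exactly once each — $n_1,\dots,n_k>p$ to kill the constraints on the first $k$ coordinates, and $n_{k+1}+\dots+n_r\leq p$ to keep the exponent $p-s$ nonnegative — so the only care needed is to keep the index ranges for $b$ and for the first block mutually compatible.
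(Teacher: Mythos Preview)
Your proof is correct and follows essentially the same route as the paper's: both split $\boldsymbol i$ into its first $k$ and last $r-k$ coordinates, use the identity $\sum_{i_1+\dots+i_k=m} I_1^{i_1}\cdots I_k^{i_k}=(I_1+\dots+I_k)^m$ (after noting the upper bounds on the first block are vacuous), factor out $(I_1+\dots+I_k)^{p-s}$, and then identify the remaining sum with $\prod_{j=k+1}^r(I_1+\dots+I_k+I_j)^{n_j}$. The only cosmetic difference is that you organize the outer sum by the value $b=i_{k+1}+\dots+i_r$, whereas the paper leaves it indexed directly by the tuple $(i_{k+1},\dots,i_r)$.
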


\begin{proof}
Let $\boldsymbol n \in \Delta_{p, q}^{(k)}$. Then 
\begin{multline*}
J_{p, q}(\boldsymbol n) = \sum_{\substack{| \boldsymbol i|=p \\ \boldsymbol 0 \leq \boldsymbol i \leq \boldsymbol n}} \boldsymbol I^{\boldsymbol i}
\\
\shoveleft{=\sum_{\substack{0 \leq i_{k+1} \leq n_{k+1} \\ \cdots \\ 0 \leq i_r \leq n_r}} \Bigg( \sum_{\substack{i_1, \dots , i_k \geq 0 \\ i_1+\dots +i_k=p-(i_{k+1}+ \dots +i_r)}} I_1^{i_1} \cdots I_k^{i_k} \Bigg) I_{k+1}^{i_{k+1}} \cdots I_r^{i_r} 
}\\
\shoveleft{=\sum_{\substack{0 \leq i_{k+1} \leq n_{k+1} \\ \cdots \\ 0 \leq i_r \leq n_r}} (I_1+\dots +I_k)^{p-(i_{k+1}+\dots +i_r)} I_{k+1}^{i_{k+1}} \cdots I_r^{i_r} 
}\\
\shoveleft{= \sum_{\substack{0 \leq i_{k+1} \leq n_{k+1} \\ \cdots \\ 0 \leq i_r \leq n_r}} (I_1+\dots +I_k)^{p-(n_{k+1}+\dots +n_r)+(n_{k+1}-i_{k+1})+ \dots + (n_r-i_r)} I_{k+1}^{i_{k+1}} \cdots I_r^{i_r} 
}\\
\shoveleft{= (I_1+\cdots +I_k)^{p-(n_{k+1}+\dots +n_r)}\sum_{\substack{0 \leq i_{k+1} \leq n_{k+1} \\ \cdots \\ 0 \leq i_r \leq n_r}} (I_1+\dots +I_k)^{(n_{k+1}-i_{k+1})+ \dots + (n_r-i_r)} I_{k+1}^{i_{k+1}} \cdots I_r^{i_r}.} 
\end{multline*}

Here one can easily compute the above last sum as  
$$\sum_{\substack{0 \leq i_{k+1} \leq n_{k+1} \\ \cdots \\ 0 \leq i_r \leq n_r}} (I_1+\dots +I_k)^{(n_{k+1}-i_{k+1})+ \dots + (n_r-i_r)} I_{k+1}^{i_{k+1}} \cdots I_r^{i_r}
=\prod_{j=k+1}^r(I_1+\dots +I_k+I_j)^{n_j}. $$
Then we have the desired equality. 
\end{proof}

\begin{Lemma}\label{lem2}
Let $p, q>0$ with $q \geq (p+1)r$ and let $k=1, \dots , r$ and $0 \leq m \leq p$. Then  
$${}^\sharp \left\{ (n_1, \dots , n_k) \in \mathbb Z_{\geq 0}^k \left| \begin{array}{l}
n_1, \dots , n_k>p, \\
n_1+\dots +n_k=p+q-m
\end{array}
\right. 
\right\}={q-(k-1)p-1-m \choose k-1}. $$
\end{Lemma}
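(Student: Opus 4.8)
The plan is to reduce the problem to a standard stars-and-bars count. Since we require $n_1, \dots, n_k > p$, i.e. $n_i \geq p+1$ for each $i$, I would substitute $n_i = p + 1 + m_i$ with $m_i \geq 0$. The condition $n_1 + \dots + n_k = p + q - m$ then becomes $m_1 + \dots + m_k = (p+q-m) - k(p+1) = q - (k-1)p - k - m$. The number of solutions in non-negative integers $m_1, \dots, m_k$ of the equation $m_1 + \dots + m_k = N$ is the classical figure $\binom{N + k - 1}{k - 1}$, so here the count is
$$\binom{\big(q - (k-1)p - k - m\big) + k - 1}{k-1} = \binom{q - (k-1)p - 1 - m}{k-1},$$
which is exactly the claimed formula.

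The one genuine subtlety is that this count is only valid when the right-hand side $N = q - (k-1)p - k - m$ is non-negative; otherwise the set on the left is empty. Here is where the hypothesis $q \geq (p+1)r$ enters: since $k \leq r$ and $m \leq p$, we have
$$N = q - (k-1)p - k - m \;\geq\; (p+1)r - (r-1)p - r - p \;=\; rp + r - rp + p - r - p \;=\; 0,$$
so $N \geq 0$ always holds under the hypothesis, and the set is never empty in the relevant range. (In fact $\binom{N+k-1}{k-1}$ agrees with the empty-set count $0$ precisely when $-k+1 \leq N < 0$, but we do not even need this, since the hypothesis rules out $N < 0$.) I would record this non-negativity check explicitly, as it is the only place the quantitative assumption $q \geq (p+1)r$ is used and it is presumably the reason that hypothesis is imposed in the statement.

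I do not anticipate any real obstacle: the proof is a one-line change of variables followed by the stars-and-bars identity, with the inequality bookkeeping above being the only thing requiring care. If one wanted to be fully self-contained one could include the short induction (or bijection-with-monomials argument) establishing $\binom{N+k-1}{k-1}$ as the number of non-negative solutions of $m_1 + \dots + m_k = N$, but since this is entirely standard I would simply cite it.
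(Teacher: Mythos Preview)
Your proof is correct and follows essentially the same approach as the paper: both shift by $p+1$ in each coordinate (the paper phrases this as the bijection $\boldsymbol{n}\mapsto\boldsymbol{n}-(p+1)\boldsymbol{e}$) and then invoke the standard count of monomials of degree $p+q-m-k(p+1)$ in $k$ variables. Your explicit verification that $N\geq 0$ under the hypothesis $q\geq (p+1)r$ is a welcome addition that the paper leaves implicit.
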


\begin{proof}
Let $S:=\{ (n_1, \dots , n_k) \in \mathbb Z_{\geq 0}^k \mid n_1, \dots , n_k>p, n_1+\dots +n_k=p+q-m \}$. 
Then the map
$\phi : S \to \{ (n_1, \dots , n_k) \in \mathbb Z_{\geq 0}^k \mid n_1+\dots +n_k=p+q-m-k(p+1) \}$
given by 
$\phi(\bsn)=\bsn-(p+1) \boldsymbol e$ is bijective so that the number ${}^\sharp S$
is just ${q-(k-1)p-1-m \choose k-1}$ which is the number of monomials of 
degree $p+q-m-k(p+1)$ in $k$ variables. 
\end{proof}

By Lemmas \ref{lem1} and \ref{lem2}, we have the explicit form of the function $\Lambda_{\Delta_{p, q}^{(k)}}(p, q)$. 

\begin{Proposition}\label{basicfunction}
Let $p, q>0$ with $q \geq (p+1)r$ and let $k=1, \dots , r$. Then 
$$\Lambda_{\Delta_{p, q}^{(k)}}(p, q)=\sum_{\stackrel{n_{k+1}, \dots , n_r \geq 0}{n_{k+1}+ \dots +n_r \leq p}} 
{q-(k-1)p-1-(n_{k+1}+\dots +n_r) \choose k-1} \ell_R(R/\fka),  $$
where $\displaystyle{
\fka:=(I_1+\dots +I_k)^{p-(n_{k+1}+\dots +n_r)} \prod_{j=k+1}^r(I_1+\dots +I_k+I_j)^{n_j}. 
}$

In particular, we have the inequality 
$$\Lambda_{\Delta_{p, q}^{(k)}}(p, q) \leq {q-(k-1)p-1 \choose k-1} \lambda_{L}(p), $$
where $\displaystyle{L=R/(I_1+\dots +I_k) \oplus \bigoplus_{j=k+1}^r R/(I_1+\dots +I_k+I_j)}$. 
\end{Proposition}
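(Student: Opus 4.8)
The plan is to partition the index set $\Delta_{p,q}^{(k)}$ according to its last $r-k$ coordinates and then feed Lemmas \ref{lem1} and \ref{lem2} into the sum defining $\Lambda_{\Delta_{p,q}^{(k)}}(p,q)$. Concretely, I would fix a ``tail'' $(n_{k+1}, \dots, n_r) \in \mathbb{Z}_{\geq 0}^{r-k}$ with $m := n_{k+1} + \dots + n_r \leq p$ and look at the fiber consisting of all $\boldsymbol n \in \Delta_{p,q}^{(k)}$ having exactly these last coordinates. Such $\boldsymbol n$ are precisely those whose first $k$ entries satisfy $n_1, \dots, n_k > p$ and $n_1 + \dots + n_k = (p+q) - m$; since $0 \le m \le p$, Lemma \ref{lem2} applies verbatim and tells us the fiber has cardinality $\binom{q-(k-1)p-1-m}{k-1}$. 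By Lemma \ref{lem1}, on this entire fiber the ideal $J_{p,q}(\boldsymbol n)$ equals the single ideal $\fka = (I_1+\dots+I_k)^{p-m} \prod_{j=k+1}^r (I_1+\dots+I_k+I_j)^{n_j}$, which depends only on the chosen tail. Summing $\ell_R(R/J_{p,q}(\boldsymbol n))$ over the fiber therefore produces $\binom{q-(k-1)p-1-m}{k-1}\,\ell_R(R/\fka)$, and summing over all admissible tails gives the displayed closed formula.

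For the ``in particular'' inequality, I would first replace each binomial coefficient $\binom{q-(k-1)p-1-m}{k-1}$ by the larger $\binom{q-(k-1)p-1}{k-1}$. This step needs a small check: using $q \ge (p+1)r$, $r \ge k$ and $m \le p$ one gets $q-(k-1)p-1-m \ge k-1 \ge 0$, so all the binomial coefficients involved are honest nonnegative integers and $\binom{x}{k-1}$ is nondecreasing for $x \ge k-1$. Pulling the resulting constant out of the sum leaves $\sum_{n_{k+1}+\dots+n_r \le p} \ell_R(R/\fka)$, and it remains to identify this quantity with $\lambda_L(p)$ for $L = R/(I_1+\dots+I_k) \oplus \bigoplus_{j=k+1}^r R/(I_1+\dots+I_k+I_j)$. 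For that I would run the same reduction as at the beginning of Section 2, now for the module $L$: its minimal free presentation has image $M_L = (I_1+\dots+I_k) \oplus \bigoplus_{j=k+1}^r (I_1+\dots+I_k+I_j) \subset R^{r-k+1}$, so $(S_L)_p/M_L^p$ decomposes as a direct sum of copies of $R$ modulo products of these ideals, indexed by exponent vectors of total degree $p$; writing the exponent of the first variable as $p - (n_{k+1}+\dots+n_r)$ turns $\lambda_L(p) = \ell_R((S_L)_p/M_L^p)$ into exactly $\sum_{n_{k+1}+\dots+n_r \le p} \ell_R(R/\fka)$, and the inequality follows.

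I expect the only genuine subtlety to be bookkeeping rather than mathematics: one must notice that the two defining conditions of $\Delta_{p,q}^{(k)}$ are tailored precisely so that, after fixing the tail, the count of admissible ``heads'' $(n_1,\dots,n_k)$ is exactly the quantity computed in Lemma \ref{lem2} (in particular so that $m = n_{k+1}+\dots+n_r$ automatically lies in the range $[0,p]$ required there), and one must correctly match the combinatorial description of $(S_L)_p/M_L^p$ with the sum over tails. Once these two identifications are in place, the proposition is a direct substitution of the two preliminary lemmas together with the elementary monotonicity of binomial coefficients.
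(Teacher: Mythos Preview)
Your proposal is correct and follows essentially the same argument as the paper: partition $\Delta_{p,q}^{(k)}$ by the tail $(n_{k+1},\dots,n_r)$, apply Lemma~\ref{lem1} so that $J_{p,q}(\boldsymbol n)$ depends only on the tail, count each fiber with Lemma~\ref{lem2}, and for the inequality bound the binomial coefficients uniformly and recognize the remaining sum as $\lambda_L(p)$ by unwinding the definition. Your added verification that $q-(k-1)p-1-m\ge k-1$ is a nice touch that the paper leaves implicit.
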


\begin{proof}

Let $p, q>0$ with $q \geq (p+1)r$ and let $k=1, \dots , r$. Then

\begin{multline*}
\Lambda_{\Delta_{p, q}^{(k)}}(p, q)=\sum_{\bsn \in \Delta_{p, q}^{(k)}} \ell_R(R/J_{p, q}(\bsn)) 
\\
\shoveleft{= \sum_{\bsn \in \Delta_{p, q}^{(k)}} \ell_R \bigg( 
R/(I_1+\dots +I_k)^{p-(n_{k+1}+\dots +n_r)} \prod_{j=k+1}^r(I_1+\dots +I_k+I_j)^{n_j}
\bigg) \ \ \ \mbox{by Lemma \ref{lem1}} 
}\\
\shoveleft{= \sum_{\substack{n_{k+1}, \dots , n_r \geq 0 \\ n_{k+1}+\dots +n_r \leq p}}  
\Bigg[
{}^\sharp \left\{ (n_1, \dots  , n_k) \in \mathbb Z_{\geq 0}^k 
\left| 
\begin{array}{l}
n_1, \dots , n_k>p, \\
n_1+\dots +n_k=p+q-(n_{k+1}+\dots +n_r)
\end{array}
\right. 
\right\} 
}\\ 
\times \  
\ell_R \bigg( R/(I_1+\dots +I_k)^{p-(n_{k+1}+\dots +n_r)} \prod_{j=k+1}^r(I_1+\dots +I_k+I_j)^{n_j} \bigg)  
\Bigg] 
\\
\shoveleft{= \sum_{\substack{n_{k+1}, \dots , n_r \geq 0 \\ n_{k+1}+\dots +n_r \leq p}} 
\Bigg[
{q-(k-1)p-1-(n_{k+1}+\dots +n_r) \choose k-1} 
}\\
\times \ \ell_R \bigg( R/(I_1+\dots +I_k)^{p-(n_{k+1}+\dots +n_r)} \prod_{j=k+1}^r(I_1+\dots +I_k+I_j)^{n_j} \bigg)  
\Bigg] \ \ \ \mbox{by Lemma \ref{lem2}}. 
\end{multline*}

This proves the first assertion. For the second one, we first note that the above last term is at most 
$$ {q-(k-1)p-1 \choose k-1} \sum_{\substack{n_{k+1}, \dots , n_r \geq 0 \\ n_{k+1}+\dots +n_r \leq p}}
\ell_R \big( R/(I_1+\dots +I_k)^{p-(n_{k+1}+\dots +n_r)} \prod_{j=k+1}^r(I_1+\dots +I_k+I_j)^{n_j} \big). $$
Then, since the above last sum is just the ordinary Buchsbaum-Rim function $\lambda_L(p)$ of 
$L:=R/(I_1+\dots +I_k) \oplus \bigoplus_{j=k+1}^r R/(I_1+\dots +I_k+I_j )$ by definition, we get the desired 
inequality. 
\end{proof}

\begin{Remark}\label{rem}
{\rm 
As stated in the above proof, the function $\lambda_L(p)$ in Proposition $\ref{basicfunction}$ is 
the ordinary Buchsbaum-Rim function of $L$ where $L$ is a direct sum of $(r-k+1)$ cyclic modules.  
Therefore the function $\lambda_L(p)$ is a polynomial function of degree $d+r-k$ for all large enough $p$. 
}
\end{Remark}

\section{Proof of Theorem \ref{main}}

We prove Theorem \ref{main}. We work under the same situation and use the same notation as in section 2. 
In order to investigate the asymptotic property of the function $\Lambda(p, q)$, 
we may assume that 
\begin{equation}\label{largepq}
q\geq(p+1)r \gg 0. 
\end{equation}
In what follows, we fix integers $p, q$ which satisfy the condition (\ref{largepq}). 
Let $H:=H_{p, q}$ and let $J(\boldsymbol n):=J_{p, q}(\boldsymbol n) $ for $\boldsymbol n \in H$. 
We note here that for any $\bsn \in H$, there exists $i=1, \dots , r$ such that $n_i > p$ 
because of the condition (\ref{largepq}).

Then the set $H$ can be divided by $r$-regions as follows: 
$$H=\coprod_{k=1}^rH^{(k)}, $$
where $H^{(k)}:=\{ \boldsymbol n \in H \mid {}^{\sharp} \{ i \mid n_i > p \}=k \}. $
Hence the function $\Lambda(p, q)$ can be expressed as follows: 
$$\Lambda(p, q)=\sum_{k=1}^r \Lambda_{H^{(k)}} (p, q). $$
Therefore it is enough to compute each function $\Lambda_{H^{(k)}}(p, q)$. 
When $k=r$, we can compute the function explicitly as follows. 

\begin{Proposition}\label{k=r}
$$\Lambda_{H^{(r)}} (p, q)={q-(r-1)p-1 \choose r-1} \ell_R (R/(I_1+\dots +I_r)^p). $$ 
\end{Proposition}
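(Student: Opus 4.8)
The plan is to observe that $H^{(r)}$ is exactly the set $\Delta_{p,q}^{(r)}$ in the notation of Section 2, since the condition ``$n_1,\dots,n_r>p$'' together with $|\bsn|=p+q$ automatically forces the empty tail sum ``$n_{r+1}+\dots+n_r\le p$'' to hold vacuously. Indeed, $\Delta_{p,q}^{(k)}$ is defined by requiring $n_1,\dots,n_k>p$ and $n_{k+1}+\dots+n_r\le p$; taking $k=r$ there are no variables $n_{k+1},\dots,n_r$, so the second condition is empty and $\Delta_{p,q}^{(r)}=\{\bsn\in H_{p,q}\mid n_1,\dots,n_r>p\}=H^{(r)}$. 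Hence $\Lambda_{H^{(r)}}(p,q)=\Lambda_{\Delta_{p,q}^{(r)}}(p,q)$, and the result should follow by specializing the formula in Proposition \ref{basicfunction} to $k=r$.

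Concretely, I would apply Proposition \ref{basicfunction} with $k=r$ (legitimate since we are assuming $q\ge(p+1)r$ throughout Section 3). In that formula the outer sum runs over $n_{k+1},\dots,n_r\ge 0$ with $n_{k+1}+\dots+n_r\le p$; for $k=r$ this index set is a single point (the empty tuple) with tail sum equal to $0$. The binomial coefficient $\binom{q-(k-1)p-1-(n_{k+1}+\dots+n_r)}{k-1}$ then becomes $\binom{q-(r-1)p-1}{r-1}$, and the ideal $\fka=(I_1+\dots+I_k)^{p-(n_{k+1}+\dots+n_r)}\prod_{j=k+1}^r(\cdots)^{n_j}$ collapses to $(I_1+\dots+I_r)^{p}$ (the product over the empty index set $j=r+1,\dots,r$ being the unit ideal). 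Substituting gives exactly
$$\Lambda_{H^{(r)}}(p,q)=\binom{q-(r-1)p-1}{r-1}\,\ell_R\!\big(R/(I_1+\dots+I_r)^p\big),$$
as claimed.

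Alternatively, and perhaps more transparently, one can redo the short direct computation: for $\bsn\in H^{(r)}$ every coordinate exceeds $p$, so in $J(\bsn)=\sum_{|\boldsymbol i|=p,\ \boldsymbol 0\le\boldsymbol i\le\bsn}\boldsymbol I^{\boldsymbol i}$ the upper bounds $\boldsymbol i\le\bsn$ are inactive (since $|\boldsymbol i|=p<n_\ell$ forces $i_\ell\le p<n_\ell$ for each $\ell$), whence $J(\bsn)=\sum_{|\boldsymbol i|=p}\boldsymbol I^{\boldsymbol i}=(I_1+\dots+I_r)^p$ is independent of $\bsn$. Then $\Lambda_{H^{(r)}}(p,q)={}^\sharp H^{(r)}\cdot\ell_R(R/(I_1+\dots+I_r)^p)$, and Lemma \ref{lem2} with $k=r$ and $m=0$ gives ${}^\sharp H^{(r)}=\binom{q-(r-1)p-1}{r-1}$. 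I do not expect any genuine obstacle here; the only points requiring a line of care are the degenerate bookkeeping when $k=r$ (empty tail, empty product) and making sure the hypothesis $q\ge(p+1)r$ needed by Lemma \ref{lem2}/Proposition \ref{basicfunction} is in force, which it is by (\ref{largepq}).
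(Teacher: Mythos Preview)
Your proposal is correct and matches the paper's own proof exactly: the paper simply notes that $H^{(r)}=\Delta_{p,q}^{(r)}$ and invokes Proposition~\ref{basicfunction}. Your alternative direct computation is also fine but unnecessary here.
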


\begin{proof}
This follows from Proposition \ref{basicfunction} since $H^{(r)}=\Delta_{p, q}^{(r)}$. 
\end{proof}

Thus we can reduce the problem to compute functions $\Lambda_{H^{(k)}}(p, q)$ for $k=1, \dots , r-1$. 
Let $1 \leq k \leq r-1$. To compute $\Lambda_{H^{(k)}}(p, q)$, 
we divide $H^{(k)}$ into ${r \choose k}$-regions as follows: 
$$H^{(k)}=\coprod_{\stackrel{A \subset [r]}{{}^{\sharp}A=r-k}} D_A^{(k)}, $$
where $D_A^{(k)}:=\{ \bsn \in H^{(k)} \mid n_i >p \ \mbox{for} \ i \notin A, n_i \leq p \ \mbox{for} \ i \in A \}$. 
Then the function $\Lambda_{H^{(k)}}(p, q)$ can be expressed as follows: 
$$\Lambda_{H^{(k)}}(p, q)=\sum_{\stackrel{A \subset [r]}{{}^{\sharp}A=r-k}} \Lambda_{D_A^{(k)}} (p, q). $$

When $k=r-1$, we can also compute the function explicitly and get the inequality as in Proposition \ref{basicfunction}. 
Here is the inequality we will use in the proof of Theorem \ref{main}. 

\begin{Proposition}\label{k=r-1}
There exists a polynomial $g_{r-1}(X) \in \mathbb Q[X]$ of degree $d+1$ such that 
$$\Lambda_{H^{(r-1)}}(p, q) \leq {q-(r-2)p-1 \choose r-2}g_{r-1}(p). $$
\end{Proposition}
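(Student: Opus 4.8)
The plan is to reduce the computation of $\Lambda_{H^{(r-1)}}(p,q)$ to a finite sum of ordinary Buchsbaum-Rim functions, each weighted by a binomial coefficient, and then bound each term. First I would use the decomposition
$$\Lambda_{H^{(r-1)}}(p,q)=\sum_{\substack{A\subset[r]\\ {}^\sharp A=1}}\Lambda_{D_A^{(r-1)}}(p,q),$$
so that $A=\{\ell\}$ for some $\ell\in[r]$, and $D_{\{\ell\}}^{(r-1)}=\{\bsn\in H\mid n_i>p\ \text{for}\ i\neq\ell,\ n_\ell\leq p\}$. The point is that each such region is (after relabeling the indices so that the exceptional index $\ell$ is moved to position $r$) precisely of the form $\Delta_{p,q}^{(r-1)}$ for the reordered list of ideals: the conditions $n_1,\dots,n_{r-1}>p$ together with $0\leq n_r\leq p$ are exactly the defining inequalities of $\Delta_{p,q}^{(r-1)}$, since $n_r\leq p$ automatically gives $n_r=n_r\leq p$, the lone ``tail'' variable. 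Hence Proposition \ref{basicfunction} applies with $k=r-1$ to each $D_{\{\ell\}}^{(r-1)}$.

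Next, applying the second (inequality) part of Proposition \ref{basicfunction} to each reordered region, I get
$$\Lambda_{D_{\{\ell\}}^{(r-1)}}(p,q)\leq\binom{q-(r-2)p-1}{r-2}\,\lambda_{L_\ell}(p),$$
where $L_\ell=R/\bigl(\sum_{i\neq\ell}I_i\bigr)\oplus R/\bigl(\sum_{i\neq\ell}I_i+I_\ell\bigr)=R/\bigl(\sum_{i\neq\ell}I_i\bigr)\oplus R/(I_1+\dots+I_r)$. Summing over the $r$ choices of $\ell$ yields
$$\Lambda_{H^{(r-1)}}(p,q)\leq\binom{q-(r-2)p-1}{r-2}\sum_{\ell=1}^r\lambda_{L_\ell}(p).$$
By Remark \ref{rem}, each $\lambda_{L_\ell}(p)$ is, for $p\gg0$, a polynomial in $p$ of degree $d+(r-(r-1))=d+1$. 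Therefore $g_{r-1}(X):=\sum_{\ell=1}^r\lambda_{L_\ell}(X)$, suitably interpreted as the eventual polynomial, is a polynomial in $\mathbb{Q}[X]$ of degree at most $d+1$; its degree is exactly $d+1$ because each summand has a positive leading coefficient (as an honest Buchsbaum-Rim polynomial of a nonzero finite-length-cokernel module), so there is no cancellation. This gives the desired inequality with a polynomial $g_{r-1}$ of degree $d+1$.

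The main obstacle I anticipate is a bookkeeping one rather than a conceptual one: one must be careful that Proposition \ref{basicfunction} was stated for the specific ordering in which the ``large'' coordinates are $n_1,\dots,n_k$ and the ``small'' ones are $n_{k+1},\dots,n_r$, whereas here the small coordinate sits at an arbitrary position $\ell$. Since the ideals $I_1,\dots,I_r$ play symmetric roles in the whole setup (permuting the $t_i$'s is an isomorphism of the polynomial ring $S$ carrying $M$ to itself), this relabeling is legitimate, but it needs to be said explicitly. A secondary, minor point is to confirm that $g_{r-1}$ is genuinely a polynomial of degree $d+1$ and not of smaller degree; this follows since $\sum_{i\neq\ell}I_i$ and $I_1+\dots+I_r$ are $\fkm$-primary, so $L_\ell$ is a nonzero module of finite length presented by a map $R^{\,\bullet}\to R^2$, and its Buchsbaum-Rim polynomial has degree $d+2-1=d+1$ with positive leading term. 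Collecting these observations gives the statement.
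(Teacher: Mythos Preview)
Your proposal is correct and follows essentially the same route as the paper: decompose $H^{(r-1)}$ into the $r$ regions $D_{\{\ell\}}^{(r-1)}$, reduce by symmetry to the case $\ell=r$ where $D_{\{r\}}^{(r-1)}=\Delta_{p,q}^{(r-1)}$, apply the inequality part of Proposition~\ref{basicfunction} with $k=r-1$, and invoke Remark~\ref{rem} to see that each $\lambda_{L_\ell}(p)$ is eventually a polynomial of degree $d+1$. Your extra remarks on the legitimacy of the relabeling and on why the leading terms cannot cancel are sound and simply make explicit what the paper leaves implicit.
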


\begin{proof}
It is enough to show that for any $j=1, \dots , r$, 
$$\Lambda_{D_{\{j\}}^{(r-1)}}(p,q) \leq {q-(r-2)p-1 \choose r-2} \lambda_{L_j}(p), $$ 
where $L_j=R/ (I_1+ \dots +\hat{I_j}+ \dots +I_{r})\oplus R/(I_1+ \dots +I_{r})$ because the function 
$\lambda_{L_j}(p)$ is a polynomial function of degree $d+1$ (see Remark \ref{rem}). 
We may only consider the case where $j=r$. Then it follows directly from Proposition \ref{basicfunction} 
since $D_{\{r\}}^{(r-1)}=\Delta_{p, q}^{(r-1)}$. 
\end{proof}

When $1 \leq k \leq r-2$, we can get the same inequality, 
although the situation is not simple as in the case where $k=r-1$.

\begin{Proposition}\label{k<r-1}
For any $1 \leq k \leq r-2$, there exists a polynomial $g_{k}(X) \in \mathbb Q[X]$ of degree $d+r-k$ 
such that $$\Lambda_{H^{(k)}}(p, q) \leq {q-(k-1)p-1 \choose k-1}g_{k}(p). $$
\end{Proposition}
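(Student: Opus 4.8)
The goal is to bound $\Lambda_{H^{(k)}}(p,q)$ for $1\le k\le r-2$, and by the decomposition $\Lambda_{H^{(k)}}(p,q)=\sum_{{}^\sharp A=r-k}\Lambda_{D_A^{(k)}}(p,q)$ it suffices to bound each $\Lambda_{D_A^{(k)}}(p,q)$ by ${q-(k-1)p-1\choose k-1}$ times a polynomial in $p$ of degree $d+r-k$; since there are finitely many (namely ${r\choose k}$) subsets $A$, summing these bounds and absorbing the constant ${r\choose k}$ into the polynomial gives $g_k(X)$. By symmetry we may take $A=\{k+1,\dots,r\}$, so $D_A^{(k)}=\{\bsn\in H\mid n_1,\dots,n_k>p,\ n_{k+1},\dots,n_r\le p\}$. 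The difference from Proposition~\ref{basicfunction} is that now $\Delta_{p,q}^{(k)}$ also required $n_{k+1}+\dots+n_r\le p$, whereas here each individual $n_i\le p$ for $i\in A$ but their sum can be as large as $(r-k)p$.

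First I would further stratify $D_A^{(k)}$ according to the value $m:=n_{k+1}+\dots+n_r$, which ranges over $0\le m\le (r-k)p$. For fixed $m$ the constraint on $(n_1,\dots,n_k)$ is exactly $n_1,\dots,n_k>p$ with $n_1+\dots+n_k=p+q-m$, so Lemma~\ref{lem2} applies verbatim (the hypothesis $q\ge(p+1)r$ is in force, and $m\le(r-k)p\le rp$ keeps us in range) and contributes the factor ${q-(k-1)p-1-m\choose k-1}$. The key point is that for every admissible $m$ this binomial coefficient is at most ${q-(k-1)p-1\choose k-1}$, since increasing the top entry only increases the binomial coefficient (here $q$ is large compared to $p$, so all the tops are nonnegative). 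So I would pull that factor out front uniformly, leaving
$$\Lambda_{D_A^{(k)}}(p,q)\le {q-(k-1)p-1\choose k-1}\sum_{\substack{n_{k+1},\dots,n_r\ge 0\\ n_i\le p}}\ell_R\big(R/J(\bsn)\big),$$
where the inner sum is over all $(n_{k+1},\dots,n_r)$ with each coordinate in $[0,p]$ and $J(\bsn)=J_{p,q}(\bsn)$ depends only on these coordinates (not on $q$, once $n_1,\dots,n_k>p$).

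Next I need to show that inner sum $h_k(p):=\sum_{0\le n_i\le p}\ell_R(R/J_{p,q}(\bsn))$ is bounded above by a polynomial in $p$ of degree $d+r-k$. There are $(p+1)^{r-k}$ terms, so it would suffice to bound each single length by a polynomial of degree $d$ — but that is false in general, so a little care is needed. Instead I would note that $J_{p,q}(\bsn)\supseteq (I_1\cdots I_k)^{?}\cdots$ — more cleanly, from the definition $J_{p,q}(\bsn)=\sum_{|\boldsymbol i|=p,\ \boldsymbol 0\le\boldsymbol i\le\bsn}\boldsymbol I^{\boldsymbol i}\supseteq (I_1+\dots+I_r)^p$ always, hence $\ell_R(R/J_{p,q}(\bsn))\le\ell_R(R/(I_1+\dots+I_r)^p)$, which is a polynomial in $p$ of degree $d$ for $p\gg0$. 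Multiplying by the number $(p+1)^{r-k}$ of terms gives $h_k(p)\le (p+1)^{r-k}\ell_R(R/(I_1+\dots+I_r)^p)$, a polynomial in $p$ of degree $d+(r-k)=d+r-k$ for large $p$. Setting $g_k(X):={r\choose k}(X+1)^{r-k}\ell_R(R/(I_1+\dots+I_r)^X)$ (interpreting the length as its eventual polynomial) and combining over all $A$ yields the claim. The main obstacle is bookkeeping rather than ideas: keeping the roles of $p$ and $q$ separate, checking the binomial-coefficient monotonicity holds uniformly in the stratification range (which is why the hypothesis $q\gg0$ relative to $p$ matters), and making sure the crude term-by-term length bound is legitimate — it is, precisely because $J_{p,q}(\bsn)$ always contains $(I_1+\dots+I_r)^p$.
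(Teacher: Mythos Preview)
Your overall strategy is sound, and in fact more streamlined than the paper's, but one step is wrong as written. You assert that $J_{p,q}(\bsn)\supseteq (I_1+\cdots+I_r)^p$ for every $\bsn\in D_A^{(k)}$. This is false: if, say, $n_r=0$ (which is allowed in $D_A^{(k)}$), then every admissible index $\boldsymbol i$ has $i_r=0$, so $J_{p,q}(\bsn)$ contains no monomial with a positive power of $I_r$; in particular $I_r^p\not\subseteq J_{p,q}(\bsn)$ in general, hence $(I_1+\cdots+I_r)^p\not\subseteq J_{p,q}(\bsn)$. The fix is immediate: since $n_1,\dots,n_k>p$, every $\boldsymbol i$ supported in the first $k$ coordinates with $|\boldsymbol i|=p$ satisfies $\boldsymbol 0\le\boldsymbol i\le\bsn$, so $J_{p,q}(\bsn)\supseteq (I_1+\cdots+I_k)^p$. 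As $I_1+\cdots+I_k$ is $\fkm$-primary, $\ell_R\bigl(R/(I_1+\cdots+I_k)^p\bigr)$ is eventually a polynomial in $p$ of degree $d$, and the rest of your argument goes through verbatim with this ideal in place of $I_1+\cdots+I_r$. (So your remark that bounding each length by a degree-$d$ polynomial ``is false in general'' is itself mistaken; it is true, just not for the reason you gave.) A smaller point: you invoke Lemma~\ref{lem2} for $m$ up to $(r-k)p$, whereas it is stated only for $0\le m\le p$; the proof of that lemma does not use the upper bound on $m$, but you should say so.

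For comparison, the paper does not bound all of $D^{(k)}$ at once. It splits $D^{(k)}=E_-^{(k)}\sqcup E_+^{(k)}$ according to whether $n_{k+1}+\cdots+n_r\le p$ or $>p$. On $E_-^{(k)}=\Delta_{p,q}^{(k)}$ it applies Proposition~\ref{basicfunction} to get the sharper bound $\binom{q-(k-1)p-1}{k-1}\lambda_L(p)$; on $E_+^{(k)}$ it observes that one can pick a monomial $(0,\dots,0,a_{k+1},\dots,a_r)$ with $a_j\le n_j$ and $\sum a_j=p$, yielding $J(\bsn)\supseteq I_{k+1}^{a_{k+1}}\cdots I_r^{a_r}\supseteq\fkb^p$ for $\fkb=I_{k+1}\cdots I_r$, and then counts $E_+^{(k)}$ separately. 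Your corrected argument replaces both halves by the single uniform inclusion $J(\bsn)\supseteq(I_1+\cdots+I_k)^p$ together with the crude count $(p+1)^{r-k}$, which is a genuine simplification for the purposes of this proposition, at the cost of losing the finer information about $E_-^{(k)}$ that the paper records.
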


\begin{proof}
Let $1 \leq k \leq r-2$. To prove the desired inequality, it is enough to show that 
for any subset $A \subset [r]$ with ${}^{\sharp} A=r-k$, there exists a polynomial
$h_A(X) \in \mathbb Q[X]$ of degree $d+r-k$ such that   
$$\Lambda_{D_A^{(k)}}(p, q) \leq {q-(k-1)p-1 \choose k-1}h_{A}(p).  $$
 
To show this, we may only consider the case where $A=\{k+1, k+2, \dots , r\}$. 
We then put $D^{(k)}:=D^{(k)}_{\{k+1, \dots , r\}}$. 
To investigate $\Lambda_{D^{(k)}}(p, q)$, we divide $D^{(k)}$ into two-parts:  
$$D^{(k)}=E_{-}^{(k)} \coprod E_{+}^{(k)}, $$
where 
$$E_{-}^{(k)}:=\{ \bsn \in D^{(k)} \mid n_{k+1}+ \dots + n_r \leq p \}, $$
$$E_{+}^{(k)}:=\{ \bsn \in D^{(k)} \mid n_{k+1}+ \dots + n_r > p \}. $$

With this notation, we have the following two lemmas. 

\begin{Lemma}\label{k-}
Let $1 \leq k \leq r-2$. Then   
$$\Lambda_{E^{(k)}_{-}}(p, q) \leq {q-(k-1)p-1 \choose k-1} \lambda_L(p)$$
where $\displaystyle{L=R/(I_1+\dots +I_k) \oplus \bigoplus_{j=k+1}^r R/(I_1+\dots +I_k+I_j)}$.
\end{Lemma}

\begin{proof}
This follows from Proposition \ref{basicfunction} since $E_{-}^{(k)}=\Delta_{p, q}^{(k)}$. 
\end{proof}

\begin{Lemma}\label{k+}
Let $1 \leq k \leq r-2$. Then 
there exists a polynomial $h(X) \in \mathbb Q[X]$ of degree $d+r-k$ such that 
$$\Lambda_{E^{(k)}_{+}}(p, q) \leq {q-(k-1)p-1 \choose k-1} h(p). $$ 
\end{Lemma}

\begin{proof}
Let $1 \leq k \leq r-2$. Then we first note that for any $\bsn \in E_{+}^{(k)}, $
\begin{eqnarray}
J(\bsn)&=&\sum_{\substack{\boldsymbol 0 \leq \boldsymbol i \leq \bsn \\ | \boldsymbol i |=p}} \boldsymbol I^{\boldsymbol i} \notag \\
&=&\sum_{\substack{0 \leq i_{k+1} \leq n_{k+1} \\ \dots \\ 0 \leq i_r \leq n_r \\ i_{k+1}+\dots +i_r \leq p}}
\Bigg( \sum_{\substack{i_1, \dots , i_k \geq 0 \\ i_1+ \cdots +i_k=p-(i_{k+1}+ \cdots + i_r) }} 
I_1^{i_1} \cdots I_k^{i_k} \Bigg) I_{k+1}^{i_{k+1}} \cdots I_r^{i_r}  \notag \\
&=& \sum_{\substack{0 \leq i_{k+1} \leq n_{k+1} \\ \dots \\ 0 \leq i_r \leq n_r \\ i_{k+1}+\dots +i_r \leq p}} 
(I_1+ \dots +I_k)^{p-(i_{k+1}+ \dots + i_r)} I_{k+1}^{i_{k+1}} \cdots I_r^{i_r}. 
\end{eqnarray}

Here we claim the following.

\medskip

{\bf Claim 1.} There exists an $\fkm$-primary ideal $\fkb$ in $R$ such that 
for any $\bsn \in E_{+}^{(k)},$  
$$\ell_R(R/J(\bsn)) \leq \ell_R(R/\fkb^p). $$ 

\medskip

Let $\fkb$ be an $\fkm$-primary ideal in $R$ such that $\fkb \subset I_j$ 
for any $j=k+1, \dots , r$ (such as $\fkb=I_{k+1} \cdots I_r$).
Let $\bsn \in E_{+}^{(k)}. $ Since $n_{k+1}, \dots , n_r \geq 0$ and $n_{k+1}+\dots +n_r > p$, 
there exist integers $a_{k+1}, \dots , a_r \in \mathbb Z$ 
such that 
$$\left\{ 
\begin{array}{l}
0 \leq a_j \leq n_j  \ \mbox{for any} \ j=k+1, \dots ,  r,  \ \mbox{and}\\
a_{k+1}+ \cdots +a_r=p. 
\end{array}
\right.  $$
Then, by the above expression (2) of $J(\bsn)$, 
$J(\bsn)
\supset I_{k+1}^{a_{k+1}} \cdots I_r^{a_r} 
\supset  \fkb^{a_{k+1}+\cdots + a_r} 
= \fkb^p$. 
Hence we have $\ell_R(R/J(\bsn)) \leq \ell_R(R/\fkb^p)$. 

\medskip

Therefore
$$\Lambda_{E_{+}^{(k)}} (p, q)
= \sum_{\bsn \in E_{+}^{(k)}} \ell_R(R/J(\bsn))
\leq \sum_{\bsn \in E_{+}^{(k)}} \ell_R(R/\fkb^p) 
={}^{\sharp} E_{+}^{(k)} \cdot \ell_R(R/\fkb^p). 
$$

{\bf Claim 2. } There exists a polynomial $h^{\circ}(X) \in \mathbb Q[X]$ of degree $r-k$ such that 
$${}^{\sharp} E_{+}^{(k)} \leq {q-(k-1)p-1 \choose k-1} \cdot h^{\circ}(p). $$

To show this, we divide $E_{+}^{(k)}$ as follows:
\begin{eqnarray*}
E_{+}^{(k)} &=& \{ \bsn \in H^{(k)} \mid n_1, \dots , n_k > p, n_{k+1}, \dots , n_r \leq p, n_{k+1}+ \cdots + n_r > p \} \\
&=& \coprod_{\substack{0 \leq n_{k+1} \leq p \\ \cdots \\ 0 \leq n_r \leq p \\ n_{k+1}+ \cdots + n_r > p}} F(n_{k+1}, \dots , n_r)
\end{eqnarray*}
where  
$$F(n_{k+1}, \dots , n_r):=\left\{(n_1, \dots , n_r) \in \mathbb Z_{\geq 0}^r  \left| 
\begin{array}{l}
n_1, \dots , n_k > p, \\
n_1+ \cdots + n_k = p+q-(n_{k+1}+ \dots + n_r) 
\end{array}
\right.
\right\}. $$
Therefore 
\begin{eqnarray*}
{}^{\sharp}E_{+}^{(k)}&=&\sum_{\substack{0 \leq n_{k+1} \leq p \\ \cdots \\ 0 \leq n_r \leq p \\ n_{k+1}+ \cdots + n_r > p}} {}^{\sharp} F(n_{k+1}, \dots , n_r) \\
&=& \sum_{\substack{0 \leq n_{k+1} \leq p \\ \cdots \\ 0 \leq n_r \leq p \\ n_{k+1}+ \cdots + n_r > p}}
{q-(k-1)p-1-(n_{k+1}+ \dots + n_r) \choose k-1} \ \ \ \ \ \mbox{by Lemma \ref{lem2}} \\
&\leq & {q-(k-1)p-1 \choose k-1} \cdot
{}^{\sharp} \left\{ (n_{k+1}, \dots, n_r) \in \mathbb Z_{\geq 0}^{r-k} \left| 
\begin{array}{l}
n_{k+1}, \dots , n_r \leq p, \\
n_{k+1}+ \cdots +n_r > p 
\end{array}
\right. 
\right\} \\
&\leq & {q-(k-1)p-1 \choose k-1} \cdot 
{}^{\sharp} \{ (n_{k+1}, \dots, n_r) \in \mathbb Z_{\geq 0}^{r-k} \mid p < n_{k+1}+ \cdots +n_r \leq (r-k)p \} \\
&=& {q-(k-1)p-1 \choose k-1} \cdot \left\{ {r-k+(r-k)p-1 \choose r-k}-{r-k+p-1 \choose r-k} \right\}. 
\end{eqnarray*}
This proves Claim 2. 

\medskip

Consequently, we have that 
$$\Lambda_{E_{+}^{(k)}} (p, q) 
\leq {}^{\sharp} E_{+}^{(k)} \cdot \ell_R(R/\fkb^p) 
\leq {q-(k-1)p-1 \choose k-1} \cdot h^{\circ}(p) \cdot \ell_R(R/\fkb^p).$$ 
Since $\ell_R(R/\fkb^p)$ is a polynomial in $p$ of degree $d$, 
the polynomial $h(X)$ which corresponds to $h(p)= h^{\circ}(p) \cdot \ell_R(R/\fkb^p)$ is our desired one. 
\end{proof}

By Lemmas \ref{k-} and \ref{k+}, 
$$\Lambda_{D^{(k)}}(p,q)
=\Lambda_{E_{-}^{(k)}}(p,q)+\Lambda_{E_{+}^{(k)}}(p,q) 
\leq {q-(k-1)p-1 \choose k-1}\Big(\lambda_L(p)+h(p)\Big). 
$$
This proves Proposition \ref{k<r-1}. \end{proof}

\medskip

Now let me give a proof of Theorem \ref{main}.

\begin{proof}[Proof of Theorem \ref{main}]
By Propositions \ref{k=r-1} and \ref{k<r-1}, for any $k=1, \dots , r-1$, 
there exists a polynomial $g_k(X) \in \mathbb Q[X]$ of degree $d+r-k$ such that 
$$\Lambda_{H^{(k)}}(p, q) \leq {q-(k-1)p-1 \choose k-1}g_k(p). $$
Since $\displaystyle{\Lambda(p, q)=\Lambda_{H^{(r)}}(p, q)+\sum_{k=1}^{r-1} \Lambda_{H^{(k)}}(p, q)}, $
we have that by Proposition \ref{k=r}, 
\begin{eqnarray*}
\Lambda(p, q)-{q-(r-1)p-1 \choose r-1} \ell_R(R/(I_1+ \dots +I_r)^p) &\leq & \sum_{k=1}^{r-1} {q-(k-1)p-1 \choose k-1} g_k(p). 
\end{eqnarray*}
Therefore, there exists a polynomial $g(X, Y) \in \mathbb Q[X, Y] $ with $\deg_Y g(X, Y) \leq r-2$ such that
$$\Lambda(p, q)-{q-(r-1)p-1 \choose r-1} \ell_R(R/(I_1+ \dots +I_r)^p) \leq g(p, q). $$
The LHS in the above inequality is a polynomial function of two variables with non-negative integer values so that
the function $\Lambda(p, q)$ can be expressed as  
$$\Lambda(p, q)={q-(r-1)p-1 \choose r-1} \ell_R(R/(I_1+ \dots +I_r)^p)+f(p, q) $$
for some $f(X, Y) \in \mathbb Q[X, Y]$ with $\deg_Y f(X, Y) \leq r-2$. 
Then, by comparing the coefficients of $p^dq^{r-1}$ in the above equality, we obtain the equality
$$e^{r-1}(R/I_1 \oplus \dots \oplus R/I_r)=e(R/I_1+ \dots +I_r). $$
Then we get the desired formula. 
\end{proof}

\section*{Acknowledgments}
The author would like to thank the referee for his/her careful reading and constructive suggestions.



\end{document}